\numberwithin{equation}{section}
\newtheorem{theorem}{Theorem}[section]
\newtheorem{lemma}[theorem]{Lemma}
\newtheorem{corollary}[theorem]{Corollary}
\theoremstyle{definition}
\newtheorem{definition}[theorem]{Definition} 
\newtheorem{remark}[theorem]{Remark}
\newtheorem{example}[theorem]{Example}
\begin{document}

 
\title{Betti numbers of toric ideals of graphs: A case study}
\thanks{Version: November 7, 2018}

\author[F. Galetto]{Federico Galetto}
\author[J. Hofscheier]{Johannes Hofscheier}
\author[G. Keiper]{Graham Keiper}
\author[C. Kohne]{Craig Kohne}
\author[M. Paczka]{Miguel Eduardo Uribe Paczka}
\author[A. Van Tuyl]{Adam Van Tuyl}

\address{Department of Mathematics\\
  Cleveland State University
  Cleveland, OH 44115-2215}
\email{f.galetto@csuohio.edu}

\address{Department of Mathematics and Statistics\\
McMaster University, Hamilton, ON, L8S 4L8}
\email{hofschej@mcmaster.ca, keipergt@mcmaster.ca, kohnec@math.mcmaster.ca,\newline 
vantuyl@math.mcmaster.ca}

\address{Department of Mathematics\\
Escuela Superior de F\'{i}sica y Matem\'{a}ticas\\
Instituto Polit\'{e}cnico Nacional\\
07300 Mexico City.} 
\email{muribep1700@alumno.ipn.mx}

\keywords{Toric ideals, graphs, graded Betti numbers, 
Hilbert series, Gr\"obner bases}
\subjclass[2000]{13D02, 13P10, 14M25, 05E40}

\begin{abstract}
We compute the graded Betti numbers for the toric ideal of a 
family of graphs constructed by adjoining a cycle to a complete bipartite 
graph.   The key observation is that this family admits an initial ideal 
which has linear quotients.  As a corollary, we compute the 
Hilbert series and $h$-vector for all the toric ideals of graphs in this family.
\end{abstract}
 
\maketitle


\section{Introduction}

Let $G$ be any finite simple graph with edge set $E(G) = \{e_1,\ldots,e_q\}$ 
and vertex set $V(G) = \{x_1,\ldots,x_n\}$.  We define the polynomial rings 
$k[V(G)]=k[x_1,\ldots,x_n]$ and $k[E(G)]=k[e_1,\ldots,e_q]$ for any field $k$,  
and the ring homomorphism
\[\phi_G:k[E(G)] \rightarrow k[V(G)] \text{ by }
e_i \mapsto x_jx_k \text{ if } e_i = \{x_j,x_k\}.
\]
The kernel of the map $\phi_G$, denoted by $I_G = {\rm ker} (\phi_G)$,
is called the \textbf{toric ideal} associated with the graph $G$.  
It is well-known that  $I_{G}$ is a homogeneous ideal generated by binomials 
(see \cite[Prop. 5.19]{EH}), and in particular, 
the generators of $I_{G}$ correspond to closed even walks in $G$ 
(see \cite{V1}, and \cite{RTT,S} for a detailed analysis of the minimal
generators).

Associated with $I_{G} \subseteq R = k[E(G)]$  is a 
\textbf{minimal graded free resolution} 
of the form:
\[
0 \rightarrow \bigoplus_{j} R(-j)^{\beta_{l,j}(I_{G})} 
\rightarrow   \bigoplus_{j} R(-j)^{\beta_{l-1,j}(I_{G})} \rightarrow \cdots 
\rightarrow   \bigoplus_{j} R(-j)^{\beta_{0,j}(I_{G})} \rightarrow 
I_{G} \rightarrow 0
\]
where $l \leq q$ and $R(-j)$ is the free $R$-module obtained by shifting 
the degrees of $R$ by $j$ (i.e., so that $R(-j)_a = R_{a-j}$). 
The number $\beta_{i,j}(I_{G})$ is called the $i,j$-th 
\textbf{graded Betti number} of $I_{G}$ and equals the number of
minimal generators of degree $j$ in the $i$-th syzygy module of 
$I_{G}$.    Ideally, we would like to determine $\beta_{i,j}(I_G)$ directly from a description of $G$.  A non-exhaustive
list of papers that have worked towards a dictionary between the invariants
of the minimal resolution and $G$ includes \cite{BHO,BOVT,CN,D,GV,HHKO,HHKO2,K,
OH,RTT,V1}.
However, given that it can be difficult
to enumerate the minimal generators of $I_G$, i.e., the numbers
$\beta_{0,j}(I_G)$, a general solution to this problem is currently unknown.

Given the difficulty of finding $\beta_{i,j}(I_G)$ 
for arbitrary graphs, one is led to study restricted families. 
In this paper we carry out a case study of a special family of graphs, and we
show that for this family, we can determine all of the graded Betti numbers
of the corresponding toric ideal.  In particular, we focus on a family of 
graphs, denoted $G_{r,d}$, that is constructed from the complete 
bipartite graph $K_{2,d}$ with $d \geq 2$,
and joining the two vertices of degree $d$ 
with an even path of length $2r-2$ with $r\ge 3$ (see Figure \ref{example_graph} for 
the graph $G_{3,5}$). These graphs are especially interesting with regard to Stanley's conjecture which asks whether any graded Cohen-Macaulay domain has a unimodal $h$-vector. We refer to Remark \ref{rem:Stanley-conj} for further details and references. From this point of view the graphs $G_{r,d}$ are appealing 
because the associated Cohen-Macaulay domains $k[E(G)]/I_G$ have $h$-vector $(1,d, \ldots, d)$ (see Theorem \ref{h-vector-theorem}). The question now is whether one can modify the very tractable graphs $G_{r,d}$ in such a way that the associated Cohen-Macaulay domain is still normal, but its $h$-vector is no longer unimodal. The quest for such examples is an active area of research in Ehrhart theory and similar approaches have been already made (see \cite{BD}).

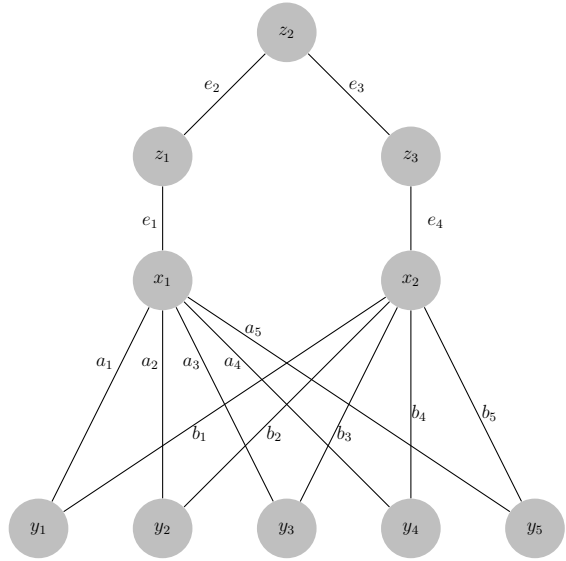
\begin{figure}
\centering
\begin{tikzpicture}
 [scale=.55,auto=left,every node/.style={circle,fill=lightgray, scale=.6, minimum size=3.2em}]
\node (x1) at (3,6) {\text{$x_{1}$}};
\node (x2) at (9,6) {\text{$x_{2}$}};
\node (y1) at (0,0) {\text{$y_{1}$}};
\node (y2) at (3,0) {\text{$y_{2}$}};
\node (y3) at (6,0) {\text{$y_{3}$}};
\node (y4) at (9,0) {\text{$y_{4}$}};
\node (y5) at (12,0) {\text{$y_{5}$}};
\node (z1) at (3,9) {\text{$z_{1}$}};
\node (z2) at (6,12) {\text{$z_{2}$}};
\node (z3) at (9,9) {\text{$z_{3}$}};

\node[fill=white] (a1) at (1.6,4) {\text{$a_{1}$}};
\node[fill=white] (a2) at (2.7,4) {\text{$a_{2}$}};
\node[fill=white] (a3) at (3.7,4) {\text{$a_{3}$}};
\node[fill=white] (a4) at (4.7,4) {\text{$a_{4}$}};
\node[fill=white] (a5) at (5.2,4.8) {\text{$a_{5}$}};
\node[fill=white] (b1) at (3.9,2.3) {\text{$b_{1}$}};
\node[fill=white] (b2) at (5.7,2.3) {\text{$b_{2}$}};
\node[fill=white] (b3) at (7.4,2.3) {\text{$b_{3}$}};
\node[fill=white] (b4) at (9.2,2.8) {\text{$b_{4}$}};
\node[fill=white] (b5) at (10.9,2.8) {\text{$b_{5}$}};
\node[fill=white] (e1) at (2.7,7.4) {\text{$e_{1}$}};
\node[fill=white] (e2) at (4.2,10.7) {\text{$e_{2}$}};
\node[fill=white] (e4) at (9.6,7.4) {\text{$e_{4}$}};
\node[fill=white] (e3) at (7.7,10.7) {\text{$e_{3}$}};

\draw (z1) -- (z2);
\draw (z3) -- (z2);

\draw (x1) -- (z1);
\draw (x1) -- (y1);
\draw (x1) -- (y2);
\draw (x1) -- (y3);
\draw (x1) -- (y4);
\draw (x1) -- (y5);

\draw (x2) -- (z3);
\draw (x2) -- (y1);
\draw (x2) -- (y2);
\draw (x2) -- (y3);
\draw (x2) -- (y4);
\draw (x2) -- (y5);

\end{tikzpicture}
\caption{Illustration of $G_{3,5}$}\label{example_graph}
\end{figure}
 
To compute the graded Betti numbers, we find
a monomial order such that the 
corresponding initial ideal $J = {\rm in}(I_{G_{r,d}})$
is a square-free monomial ideal.  We exploit the fact that
$\beta_{i,j}(I_{G_{r,d}}) \leq \beta_{i,j}(J)$, and that for all $G_{r,d}$, the 
ideal $J$ has linear quotients whose graded Betti numbers can be 
computed using \cite{SV}.   

The approach of using the initial ideal
was also employed in \cite{BOVT}; we hope this case study
further illustrates the usefulness of this technique to 
compute (or bound) graded Betti numbers of toric ideals, especially since  
we know of very few families of graphs where we know all the values
$\beta_{i,j}(I_G)$.  Our work is in the spirit of Conca
and Varbaro's paper \cite{CV} which shows that many of the homological
invariants of an ideal are related to its initial ideal if the initial ideal is
square-free. Note that our special family can give us some partial insight
into the graded Betti numbers of $I_G$ for more general graphs.  In
particular,  if $G_{r,d}$ is an induced subgraph of $G$,
we can apply work of
Beyarslan, H\`a, and O'Keefe \cite{BHO} 
to show $\beta_{i,j}(I_{G_{r,d}}) \leq \beta_{i,j}(I_G)$;
Theorem \ref{invariants}  uses this inequality
to give lower bounds on the Castelnuovo-Mumford
regularity and projective dimension for any toric ideal $I_G$.

We have organized the paper as follows. In Section~\ref{prelim-sec}, we 
recall the necessary preliminaries. In Section~\ref{betti-numbers-Grd}, we 
provide an explicit formula for the graded Betti numbers of the 
toric ideal of the graphs $G_{r,d}$ (see Theorem~\ref{betti-full}). 
Finally, in Section~\ref{consequences}, among other things, 
we compute the Hilbert series for the ring 
$k[E(G_{r,d})] \slash I_{G_{r,d}}$ (see Theorem~\ref{h-vector-theorem}).

\noindent
{\bf Acknowledgments.}  
This paper started as a class project for a graduate reading course
on toric ideals.  Paczka thanks CONACYT for financial support and 
the hospitality of McMaster University. Van Tuyl's
research was supported by NSERC Discovery Grant 2014-03898.
Experiments with
{\it Macaulay2} \cite{GS} led to many of our results.  We would
also like to thank Jennifer Biermann, Tina O'Keefe, and the referee for their 
feedback.


\section{Background}\label{prelim-sec}

We recall the relevant graph theory and commutative algebra background.

\subsection{Simple graphs, walks, and binomials}
Let $G$ be a graph with vertex set $V(G)=\{x_1,...,x_n\}$ and edge set 
$E(G)=\{e_1,...,e_q\}$. For each edge $e_i$ there is an associated pair of 
distinct vertices $\{x_{i_{1}},x_{i_{2}}\}$; we will use $e_i$ and 
$\{x_{i_{1}},x_{i_{2}}\}$ interchangeably.  All graphs are assumed to 
be  \textbf{finite simple graphs}, i.e., $|V(G)| < \infty$ and there is at most
one edge between any pair of vertices. 

A \textbf{walk} $W$ of length $r$ is a sequence of edges 
$W = (\{v_0,v_1\},\{v_1,v_2\},\dots,\{v_{r-1},v_r\})$, where the $v_j$ are 
vertices in $V(G)$ and the $\{v_j,v_{j+1}\}$ are the edges connecting $v_j$ 
and $v_{j+1}$. A walk is a \textbf{closed walk} if $v_0=v_r$, and it is 
an \textbf{even walk} if $r$ is even. 
We say a closed walk 
$W=(e_0,e_1,\ldots,e_{r-1})$ is a \textbf{minimal} closed walk if no two 
consecutive (modulo $r$) edges are equal. 

The generators of the toric ideal of $I_G$ are binomials that can
be described in terms of closed even walks in $G$. Specifically,  
if $W = (\{v_0,v_1\},\{v_1,v_2\},\dots,\{v_{2s-1},v_{2s}=v_0\})$ is a closed
even walk, 
we associate with $W$ the binomial
\begin{equation*}
f_W = \prod_{j=1}^{s}e_{2j-1} - \prod_{j=1}^{s}e_{2j} \in k[e_1,\dots,e_q]  
\end{equation*}
where $e_j = \{v_{j-1},v_{j}\}$ for $1\leq j \leq 2s$. A binomial $f = u - v \in I_{G}$ is called a \textbf{primitive binomial} if there is no other binomial $g = u' - v' \in I_{G}$ such that $u'|u$ and $v'|v$. A closed even walk $W$ of $G$ is a \textbf{primitive walk} if its associated binomial $f_{W}$ is a primitive binomial. The following observation will be useful in the sequel.
\begin{theorem}{\rm\cite[Proposition~5.19]{EH}} 
\label{primitive-theorem}
Let G be a finite simple graph. Then the set of primitive walks in $G$ is a Gr\"{o}bner basis of $I_{G}$ with respect to any monomial ordering.
\end{theorem}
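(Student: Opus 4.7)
The plan is to establish two things: first, that each binomial $f_W$ for $W$ a closed even walk actually lies in $I_G$, and second, that the family of $\mathrm{in}_<(f_W)$ (as $W$ ranges over \emph{primitive} walks) generates $\mathrm{in}_<(I_G)$ for an arbitrary monomial order $<$. The first assertion is a direct computation: if $W=(\{v_0,v_1\},\ldots,\{v_{2s-1},v_0\})$, then each vertex of $G$ occurs the same number of times among odd-indexed edges as among even-indexed edges (consecutive edges share a vertex and the walk closes), so $\phi_G\bigl(\prod e_{2j-1}\bigr)=\phi_G\bigl(\prod e_{2j}\bigr)$ and $f_W\in\ker\phi_G=I_G$.

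For the Gröbner basis property, I would exploit the fact that $I_G$ is a prime binomial ideal: it suffices to show that for every binomial $f=u-v\in I_G$ with $u=\mathrm{in}_<(f)$ and $\gcd(u,v)=1$, there exists a primitive walk $W$ with $\mathrm{in}_<(f_W)\mid u$. I would argue by Noetherian induction on the non-leading monomial $v$ (with the well-order supplied by $<$). If $f$ is itself primitive, take $W$ with $f=\pm f_W$ and we are done. Otherwise, by the definition of primitivity, there is some $g=u'-v'\in I_G$ with $u'\mid u$, $v'\mid v$, and $(u',v')\neq(u,v)$; by replacing $g$ with a divisor of it, we may assume $g$ is primitive.

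There are now two cases. If $\mathrm{in}_<(g)=u'$, then $\mathrm{in}_<(f_W)=u'\mid u$ for the primitive walk $W$ associated to $g$, finishing the argument. In the harder case $\mathrm{in}_<(g)=v'$, form
\[
\tilde f \;=\; f - (v/v')\,g \;=\; u - (v/v')\,u'.
\]
Since $(v/v')u' < (v/v')v' = v < u$, the leading monomial of $\tilde f$ is still $u$, while its non-leading monomial $(v/v')u'$ is strictly smaller than $v$ in $<$. After cancelling any new common factors (which only decreases the non-leading term further), the induction hypothesis applies and yields a primitive walk $W$ with $\mathrm{in}_<(f_W)$ dividing the leading monomial of the reduced binomial, which is $u$.

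The main obstacle is precisely this second case: one must ensure that the reduction preserves the leading monomial $u$ while strictly shrinking the other monomial, and that the process terminates. Termination follows from the well-foundedness of $<$ on monomials; preservation of the leading term is what makes the induction deliver divisibility of $u$ rather than merely of $v$. Once these are in place, the theorem follows, and as a bonus one sees that the argument works uniformly in the monomial order, confirming that the set of primitive binomials is in fact a universal Gröbner basis of $I_G$.
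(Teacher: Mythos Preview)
The paper does not supply its own proof of this statement; it is quoted from \cite{EH} and used as a black box. So there is no argument in the paper to compare against, and I can only assess your proposal on its own terms.

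Your outline is essentially the standard proof that the Graver basis of a toric ideal is a universal Gr\"obner basis (as in Sturmfels' book), specialised to $I_G$. The inductive reduction is sound: the well-order $<$ guarantees termination, and your check that $(v/v')u' < v < u$ is correct, so the leading monomial is preserved through the reduction. Two small points deserve attention. First, after you cancel common factors from $\tilde f$, the leading monomial becomes $u/\gcd$, not $u$ itself; your sentence ``which is $u$'' is not quite accurate, but the conclusion survives because $u/\gcd$ divides $u$. Second, and more substantively, your base case ``if $f$ is itself primitive, take $W$ with $f=\pm f_W$'' silently assumes that every binomial $u-v\in I_G$ with $\gcd(u,v)=1$ arises as $f_W$ for some closed even walk $W$. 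This bijection between such binomials and closed even walks is the graph-theoretic input that distinguishes $I_G$ from a general toric ideal; it is true and well known (the paper itself cites \cite{V1} for it in the introduction), but since it is exactly what connects ``primitive binomial'' to ``primitive walk'' you should state it and indicate why it holds rather than absorb it into the base case without comment.

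With that correspondence supplied, your argument is correct and indeed shows the stronger conclusion you note at the end: the primitive binomials form a universal Gr\"obner basis.
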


The next lemma follows directly from the definitions.

\begin{lemma}\label{closed-car}\label{closed-descom}
Let $G$ be a finite simple graph.
\begin{enumerate}
\item[$(i)$]
All primitive walks of $G$ are minimal walks.
\item[$(ii)$] A closed even walk $W$ of $G$ which can be expressed as 
two consecutive closed even walks, i.e., 
\[
W = (\{x_{1,1},x_{1,2}\},\{x_{1,2},x_{1,3}\}, \ldots  ,\{x_{1,n},x_{1,1}\}   ,\{x_{2,1},x_{2,2}\},\{x_{2,2},x_{2,3}\}, \ldots  ,\{x_{2,m},x_{2,1}\})
\]
with $x_{1,1} = x_{2,1}$, is not primitive.
\end{enumerate}
\end{lemma}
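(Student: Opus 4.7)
For part $(i)$, the plan is to argue by contrapositive: assume that a closed even walk $W = (e_1,\ldots,e_{2s})$ fails to be minimal and deduce that it cannot be primitive. Non-minimality supplies an index $i$ with $e_i = e_{i+1}$ (indices taken modulo $2s$). Consecutive positions have opposite parity, so the same edge appears once as an odd-indexed edge and once as an even-indexed edge of $W$; consequently both monomials of $f_W = \prod_{j=1}^s e_{2j-1} - \prod_{j=1}^s e_{2j}$ are divisible by $e_i$. Because the two equal edges share the same endpoints, deleting them from the sequence produces a walk $W'$ of length $2s-2$ that is still closed. If $s\ge 2$, then $W'$ is a nontrivial closed even walk, its associated binomial $f_{W'} = u' - v'$ lies in $I_G$, and the monomials $u',v'$ properly divide those of $f_W$, contradicting primitivity. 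The edge case $s=1$ corresponds to $W = (e_1,e_2)$ with $e_1=e_2$, in which case $f_W = 0$, so $W$ yields no nontrivial relation and is not primitive either.

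For part $(ii)$, the plan is to compute $f_W$ directly from the decomposition $W = W_1 \circ W_2$ and display a factorization that witnesses non-primitivity. Write $W_1$ and $W_2$ as closed even walks of lengths $2s_1$ and $2s_2$ respectively, and let $f_{W_i} = u_i - v_i$ be their associated binomials. Since $2s_1$ is even, the parity of each position in $W_2$ is preserved when it is shifted to position $2s_1 + j$ within $W$; hence the odd-indexed (respectively even-indexed) edges of $W$ are exactly the union of the odd-indexed (respectively even-indexed) edges of $W_1$ and $W_2$. This yields
\[
f_W = u_1 u_2 - v_1 v_2.
\]
Taking $g = f_{W_1} = u_1 - v_1 \in I_G$, we have $u_1 \mid u_1u_2$ and $v_1 \mid v_1v_2$, so provided $g$ is a nonzero binomial distinct from $f_W$, primitivity fails. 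Distinctness follows because $u_1u_2 = u_1$ would force $u_2 = 1$, contradicting that $W_2$ has positive length.

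The remaining subtlety is the degenerate case $f_{W_1} = 0$ (that is, $u_1 = v_1$): here I would simply swap the roles of $W_1$ and $W_2$ and use $g = f_{W_2}$ instead; if both $f_{W_1}$ and $f_{W_2}$ vanish then $f_W = 0$ and the walk carries no relation at all, so is trivially non-primitive. Neither part requires any real machinery beyond bookkeeping of parities and a careful treatment of these degenerate cases, which is the only place a careless write-up could slip; this is presumably why the authors say the lemma follows directly from the definitions.
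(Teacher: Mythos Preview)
Your argument is correct. The paper does not actually supply a proof of this lemma; it simply states that it ``follows directly from the definitions,'' and what you have written is precisely the natural unpacking of that remark. In particular, your parity bookkeeping in both parts (that removing two consecutive equal edges, or splitting off a closed even sub-walk, preserves the odd/even partition of the remaining edges up to the obvious cancellation or factorization) is exactly the content the authors are leaving to the reader, and your handling of the degenerate cases ($s=1$ in part~$(i)$, and $f_{W_1}=0$ or $f_{W_2}=0$ in part~$(ii)$) closes the only gaps one might worry about.
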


\subsection{Linear quotients, initial ideals, and 
Hilbert series} 
We recall some general results about homogeneous ideals
$I$ in a polynomial ring $S = k[z_1,\ldots,z_n]$.  
Ideals with linear quotients, which play a key role in our main theorem, 
were introduced by Herzog and
Takayama \cite{HT};  a more general definition can be found in
Sharifan and Varbaro \cite{SV}.

\begin{definition}\label{quotient-defm}
Let $I \subseteq S$ be a homogeneous ideal with an ordered 
minimal set of homogeneous generators $\{f_1,\ldots,f_{m}\}$. Then $I$ has 
\textbf{linear quotients} with respect to $\{f_1,\ldots,f_{m}\}$ if each 
ideal quotient 
$\langle f_1,\ldots, f_{j-1}\rangle: \left\langle  f_j \right\rangle$ 
for $j=2,...,m$ is generated by linear forms. 
\end{definition}

When $I$ has linear quotients, the graded Betti numbers are a function of 
the number of generators of the ideal quotients.

\begin{theorem}[{\cite[Corollary~2.7]{SV}}]\label{quotient-compute}
Let $I \subseteq S$ 
be a homogeneous ideal with linear quotients with respect to 
$f_1,\ldots,f_m$ where $\{f_1,\ldots,f_m\}$ is a minimal system of 
homogeneous generators for $I$.  Let $n_p$ be the minimal number of 
homogeneous generators of 
$\langle f_1,\ldots,f_{p-1}\rangle: \left\langle f_p \right\rangle$ 
for $p=1,\ldots,m$. Then
\[
\beta_{i,i+j}(I)=\sum_{1 \leq p \leq m,\text{ }{\rm  deg}(f_{p})=j} \binom{n_{p}}{i}.
\]
\end{theorem}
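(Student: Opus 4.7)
The plan is to induct on $m$, building a minimal graded free resolution of $S/I$ one generator at a time via an iterated mapping cone. Set $I_p=\langle f_1,\ldots,f_p\rangle$ and $L_p=I_{p-1}:\langle f_p\rangle$, so that by hypothesis $L_p$ is generated by $n_p$ linear forms. The base case $p=1$ is immediate: $I_1=(f_1)$ is principal, $L_1=0$, so $n_1=0$, and the formula's prediction $\binom{0}{i}=\delta_{i,0}$ matches the evident $\beta_{0,\deg f_1}(I_1)=1$.

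For the inductive step, the driving short exact sequence is
\[
0\longrightarrow(S/L_p)(-\deg f_p)\xrightarrow{\ \cdot f_p\ } S/I_{p-1}\longrightarrow S/I_p\longrightarrow 0.
\]
A minimal generating set of an ideal of linear forms is automatically linearly independent, so $L_p$ is a complete intersection of $n_p$ linear forms and its Koszul complex is the minimal graded free resolution of $S/L_p$, with $\beta_{i,i}(S/L_p)=\binom{n_p}{i}$. Shifting by $-\deg f_p$ and forming the mapping cone with a minimal resolution of $S/I_{p-1}$ (obtained inductively) yields a graded free resolution of $S/I_p$.

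The main obstacle is verifying that this mapping cone is minimal, i.e., that the comparison map $\psi_\bullet$ can be chosen with all entries in $\mathfrak{m}=(z_1,\ldots,z_n)$. For $\psi_0$, which is multiplication by $f_p$, this is immediate. For $\psi_1$, writing the linear generators of $L_p$ as $\ell_1,\ldots,\ell_{n_p}$ and expressing $\ell_i f_p=\sum_{j<p}c_{ij}f_j$, the entries of $\psi_1$ are precisely the $c_{ij}$; a nonzero constant $c_{ij_0}$ would, upon rearrangement, exhibit $f_{j_0}$ as an $S$-linear combination of $\{f_p\}\cup\{f_j:j<p,\,j\neq j_0\}$, contradicting the minimality of the generating set $\{f_1,\ldots,f_m\}$. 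For $\psi_i$ with $i\geq 2$, one has to show that the intrinsic Tor map $\mathrm{Tor}^S_i((S/L_p)(-\deg f_p),k)\to\mathrm{Tor}^S_i(S/I_{p-1},k)$ vanishes; this is the genuine technical heart of the result, since these induced maps are invariant under chain homotopy and so cannot be killed merely by reselecting lifts. It is handled in \cite{SV} via an explicit inductive construction of the resolution (in the spirit of Herzog and Takayama).

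Granted the minimality of each mapping cone, reading off the graded Betti numbers from the long exact Tor sequence gives the recursion
\[
\beta_{i,i+j}(I_p)=\beta_{i,i+j}(I_{p-1})+\binom{n_p}{i}\cdot[\deg f_p=j],
\]
and telescoping from $p=1$ up to $p=m$ produces the desired formula $\beta_{i,i+j}(I)=\sum_{p:\deg f_p=j}\binom{n_p}{i}$.
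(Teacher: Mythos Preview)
The paper does not prove this statement at all: Theorem~\ref{quotient-compute} is simply quoted from \cite[Corollary~2.7]{SV} and used as a black box, so there is no ``paper's proof'' to compare against. Your outline is in fact the standard argument behind that corollary (the Herzog--Takayama iterated mapping cone, as refined by Sharifan--Varbaro), and the setup is correct: the short exact sequence, the identification of $L_p$ as a complete intersection of linear forms, and the resulting recursion for the Betti numbers are all exactly right. Your treatment of $\psi_0$ and $\psi_1$ is fine; note that the degree bookkeeping already shows a unit entry in $\psi_i$ could only occur when some earlier $f_q$ has $\deg f_q=\deg f_p+1$, so your minimality argument for $\psi_1$ is really the template for all $i$.

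The one genuine gap is that you explicitly hand off the $i\ge 2$ minimality to \cite{SV}, which is the very result you are proving. As a sketch pointing the reader to the right literature this is fine, but as a self-contained proof it is circular: the content of \cite{SV} is precisely the verification that the induced maps $\mathrm{Tor}^S_i\bigl((S/L_p)(-\deg f_p),k\bigr)\to\mathrm{Tor}^S_i(S/I_{p-1},k)$ vanish without any assumption on the ordering of the degrees $\deg f_q$. If you want a complete argument, you should either (a) impose $\deg f_1\le\cdots\le\deg f_m$, in which case the degree count above kills all $\psi_i$ for $i\ge 1$ immediately, or (b) reproduce the inductive construction from \cite{SV} (or \cite{HT}) that shows the comparison maps can always be chosen with entries in the maximal ideal.
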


Fix a monomial order $<$ on the ring
$S$. If $I$ is any homogeneous ideal,
then the {\bf initial ideal} of $I$, denoted ${\rm in}_<(I)$ 
(or ${\rm in}(I)$ if $<$ is clear), is the monomial ideal generated 
by the initial terms of the elements of $I$.  We shall need the following 
result:

\begin{theorem}[{\cite[Theorem 22.9]{P}}]\label{in-bound} Fix any monomial order $<$ on $S$.
If $I \subseteq S$ is a homogeneous ideal,  then $\beta_{i,j}(I) \leq \beta_{i,j}({\rm in}_<(I))$
for all $i,j \geq 0$.
\end{theorem}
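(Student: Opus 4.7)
The plan is to realize $I$ and ${\rm in}_<(I)$ as two fibers of a flat one-parameter family of ideals over $\mathbb{A}^1_k$, and then show that a minimal free resolution of the special fiber lifts to a (generally non-minimal) resolution of the generic fiber. The inequality on Betti numbers falls out of this comparison. This is the standard Gr\"obner deformation argument.

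The first step is to replace $<$ by an integer weight vector $w \in \mathbb{Z}_{>0}^n$ that selects the same initial terms as $<$ on a reduced Gr\"obner basis of $I$; such a $w$ exists by standard Gr\"obner-basis theory. Using $w$, I form the homogenization $\widetilde{I} \subseteq S[t]$: for each $f = \sum_\alpha c_\alpha z^\alpha \in I$, set $\widetilde{f} = \sum_\alpha c_\alpha\, t^{d_f - w\cdot \alpha}\, z^\alpha$ with $d_f = \max_{\alpha : c_\alpha \ne 0} w\cdot \alpha$, and let $\widetilde{I}$ be the ideal generated by all such $\widetilde{f}$. The crucial property is that $S[t]/\widetilde{I}$ is flat over $k[t]$; equivalently, $t$ is a nonzerodivisor on $S[t]/\widetilde{I}$. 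Specialization at $t=1$ recovers $S/I$, while specialization at $t=0$ recovers $S/{\rm in}_<(I)$.

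Next, let $G_\bullet \to S/{\rm in}_<(I) \to 0$ be a minimal bigraded free resolution over $S$, where the grading on $S$ comes from the standard grading together with the $w$-grading. Using flatness of $S[t]/\widetilde{I}$ over $k[t]$ and a standard Nakayama-style lifting of differentials, I lift $G_\bullet$ to a bigraded complex $\widetilde{G}_\bullet$ of free $S[t]$-modules whose reduction modulo $t$ is $G_\bullet$; flatness (together with exactness of $G_\bullet$) then forces $\widetilde{G}_\bullet$ to be a resolution of $S[t]/\widetilde{I}$, and the ranks (and graded shifts) of the $\widetilde{G}_i$ match those of $G_i$. Specializing $\widetilde{G}_\bullet$ at $t = 1$ yields a graded free resolution of $S/I$ which is no longer guaranteed to be minimal but has the same ranks and shifts. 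Hence $\beta_{i,j}(I)$, being the minimal number of generators in bidegree $(i,j)$ of any resolution, is bounded above by these ranks, which equal $\beta_{i,j}({\rm in}_<(I))$.

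The main obstacle is the flatness of $S[t]/\widetilde{I}$ over $k[t]$, i.e., proving that multiplication by $t$ is injective on the quotient. This is the technical core of Gr\"obner deformation, and the clean way to handle it is to show that the $w$-homogenizations of a reduced Gr\"obner basis of $I$ form a Gr\"obner basis of $\widetilde{I}$ with respect to an appropriate elimination-style order on $S[t]$; the injectivity of multiplication by $t$ then becomes a combinatorial consequence of the structure of the corresponding initial ideal. Once flatness is in hand, the lifting-and-specializing argument in the previous paragraph is essentially formal.
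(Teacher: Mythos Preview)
The paper does not give its own proof of this statement; it simply cites \cite[Theorem~22.9]{P} and moves on. There is therefore no argument in the paper to compare against.

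Your sketch is the standard Gr\"obner-deformation proof and is essentially correct. A couple of points are worth tightening. First, in the lifting step you write that ``flatness (together with exactness of $G_\bullet$) then forces $\widetilde{G}_\bullet$ to be a resolution.'' The mechanism here is not flatness of $S[t]/\widetilde{I}$ per se, but graded Nakayama: once you have a bigraded complex $\widetilde{G}_\bullet$ over $S[t]$ with $\widetilde{G}_\bullet/t\widetilde{G}_\bullet \cong G_\bullet$, the long exact sequence coming from $0\to \widetilde{G}_\bullet \xrightarrow{t} \widetilde{G}_\bullet \to G_\bullet \to 0$ shows that multiplication by $t$ is surjective on each $H_i(\widetilde{G}_\bullet)$ for $i>0$, and since $t$ has positive $w$-degree this forces $H_i(\widetilde{G}_\bullet)=0$. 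Flatness of $S[t]/\widetilde{I}$ over $k[t]$ is what you then use to guarantee that specializing at $t=1$ (and at $t=0$) preserves exactness. Second, to ensure $H_0(\widetilde{G}_\bullet)=S[t]/\widetilde{I}$ you should begin the lift by sending the generators ${\rm in}_<(g)$ of ${\rm in}_<(I)$ to the $w$-homogenizations $\widetilde{g}$ of the reduced Gr\"obner basis elements $g$; you implicitly use that these generate $\widetilde{I}$, which is part of the same package of facts that gives flatness. With these clarifications, your outline matches the proof one finds in the cited reference.
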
  

For a homogeneous ideal $I \subseteq S$, the
\textbf{Hilbert series} of $S/I$ is the generating function
\[HS_{S/I}(t)=\sum_{i=0}^{\infty} 
\dim_k(S/I)_i t^{i}=\frac{Q(t)}{(1-t)^{d}}
\]
where $Q(t)\in{\mathbb{Z}[t]}$, $d= \dim S/I$, and
$\dim_k(S/I)_i$ is the dimension of the $i$-th graded piece
of $S/I$.  If $Q(t) = h_0 + h_1t + \cdots + h_rt^r$, then the 
\textbf{$h$-vector} is of $S/I$ is $h = (h_0,\ldots,h_r)$.
The next lemma allows to deduce when $I$ 
and ${\rm in}_<(I)$ have the same graded Betti numbers.

\begin{lemma} 
\label{adam-lemma}
Fix any monomial order $<$ on $S$.
Let $I \subseteq S$ be a homogeneous ideal, and let $J = {\rm in}_<(I)$.
Suppose that there exists an integer $k$ such that 
$\beta_{i,i+j}(I) = \beta_{i,i+j}(J)$ for all $i$ and all $j \neq k$. 
Then we also have $\beta_{i,i+k}(I) = \beta_{i,i+k}(J)$ for all $i \geq 0$.
\end{lemma}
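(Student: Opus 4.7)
The plan is to exploit the fact that passing to an initial ideal preserves the Hilbert function, so $S/I$ and $S/J$ have identical Hilbert series, and then extract the missing diagonal from an alternating-sum identity on graded Betti numbers.

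First I would recall that from a minimal graded free resolution one can read off the Hilbert series via the standard formula
\[
HS_{S/I}(t)(1-t)^n = \sum_{i,j}(-1)^{i}\beta_{i,j}(S/I)\,t^{j}
\]
(where $n$ is the number of variables of $S$), and the analogous identity holds for $S/J$. Since $\mathrm{in}_<$ preserves the Hilbert function, the two right-hand sides are equal as polynomials in $t$. Comparing coefficients of $t^{d}$ and shifting the index from $S/I$ to $I$ (which only shifts the homological degree by one and does not affect the equation), I obtain for every degree $d$:
\[
\sum_{i\geq 0}(-1)^{i}\beta_{i,d}(I)=\sum_{i\geq 0}(-1)^{i}\beta_{i,d}(J).
\]

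Next I rewrite each $\beta_{i,d}$ as $\beta_{i,i+(d-i)}$, so that the index $j = d-i$ records the ``strand.'' By hypothesis, $\beta_{i,i+j}(I)=\beta_{i,i+j}(J)$ whenever $j\neq k$, so every term in the alternating sum above cancels except the unique one with $d-i=k$, namely $i=d-k$. Writing $d=i+k$, the identity collapses to
\[
(-1)^{i}\bigl[\beta_{i,i+k}(I)-\beta_{i,i+k}(J)\bigr]=0
\]
for each $i \geq 0$, which gives exactly the desired equality.

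There is no real obstacle here beyond being careful that the sum $\sum_i (-1)^i \beta_{i,d}(\cdot)$ picks up contributions from at most one value of $i$ in each strand $j=d-i$. The crucial input is that $S/I$ and $S/\mathrm{in}_<(I)$ share a Hilbert series, which is a standard Gröbner basis fact; everything else is bookkeeping.
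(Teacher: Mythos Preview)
Your proof is correct and follows essentially the same approach as the paper: both use that $S/I$ and $S/J$ share a Hilbert series, express the numerator as the alternating sum of graded Betti numbers, cancel all terms with $j\neq k$ by hypothesis, and then read off the equality $\beta_{i,i+k}(I)=\beta_{i,i+k}(J)$ from what remains. The only cosmetic difference is the order of operations---the paper cancels the $j\neq k$ terms first and then compares coefficients of $t^{i+k}$, whereas you compare coefficients of $t^d$ first and then cancel.
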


\begin{proof}
Let $b_{i,i+j} = \beta_{i,i+j}(I)$ and $c_{i,i+j} = \beta_{i,i+j}(J)$.  
The ideals $I$ and $J$ have the same Hilbert series
(see \cite[Proposition 4.29]{EH}),  which can be expressed 
in terms of their graded 
Betti numbers via the formulas (see \cite[Proposition 4.27]{EH}):
\begin{eqnarray*}
HS_{S/I}(t) = \frac{\sum_{i}(-1)^i\left(\sum_j b_{i,i+j}t^{i+j}\right)}{(1-t)^n} \text{, and } HS_{S/J}(t) = \frac{\sum_{i}(-1)^i\left(\sum_j c_{i,i+j}t^{i+j}\right)}{(1-t)^n} .
\end{eqnarray*}
Our hypotheses imply that $b_{i,i+j} = c_{i,i+j}$ for all $i$ and $j$ 
if $j \neq k$.  Since $HS_{R/I}(t) = HS_{R/J}(t)$,
after this substitution and simplifying, we arrive at
\[\sum_i (-1)^i b_{i,i+k}t^{i+k} = \sum_{i} (-1)^i c_{i,i+k}t^{i+k}. \]
Comparing the coefficients on both sides gives the desired identity.
\end{proof}


\section{The family of graphs $G_{r,d}$ and their toric ideals}
\label{betti-numbers-Grd}

We now define the family of graphs $G_{r,d}$ which will be the focus of our
case study.   Given positive integers $r \geq 3$ and $d \geq 2$,
the graph $G_{r,d}$ has  vertex set
\[
V(G_{r,d})=\{x_{1},x_{2},y_{1},\ldots,y_{d},z_{1},\ldots ,z_{2r-3}\}
\]
and edge set
\begin{align*}
E(G_{r,d})=\{& \{x_{i},y_{j}\} \mid 1 \leq i \leq 2, 1 \leq j \leq d \} \cup \{ \{x_{1},z_{1}\}, \{x_{2},z_{2r-3}\} \}\\
&\cup \{ \{z_{i},z_{i+1}\} \mid  1 \leq i \leq 2r-4 \}.
\end{align*} 
We label the edges of $G_{r,d}$ as follows. Let $e_{1}=\{x_{1},z_{1}\}$, $e_{2r-2}=\{z_{2r-3},x_{2}\}$ and for $i\in{\{2,\ldots,2r-3\}}$ 
let $e_{i}=\{z_{i-1},z_{i}\}$. For $i\in{\{1,\ldots,d\}}$ let $a_{i}=\{x_{1},y_{i}\}$ and $b_{i}=\{x_{2},y_{i}\}$.   Informally, the graph $G_{r,d}$ is the
complete bipartite graph $K_{2,d}$ (the graph with
$V(K_{2,d}) = \{x_1,x_2,y_1,\ldots,y_d\}$ and $E(K_{2,d}) = \{\{x_i,y_j\} ~|~
1\leq i \leq 2, 1 \leq j \leq d \})$) where we have added a path of length
$2r-2$ between the two vertices of degree $d$ in $K_{2,d}$.  See
Figure \ref{example_graph} in the introduction for the graph $G_{3,5}$
equipped with our labelling.

\begin{remark}
It is prudent to discuss the boundary cases.  Note that if $r=1,2$ or 
if $d=1$, then we can still define $G_{r,d}$.
If $r=1$, then we are adding a path of length zero to $K_{2,d}$, and thus
$G_{1,d}=K_{2,d}$.  If $r=2$, then $G_{2,d}=K_{2,d+1}$.  Since the graded Betti 
numbers of $I_{K_{2,d}}$ are known (and we will give a new proof) we can assume 
throughout that $r\ge 3$.  Furthermore, if $d=1$, then
$G_{r,1}$ is a cycle of length of $2r$ if $r \geq 2$, or $G_{1,1} = K_{2,1}$.
In the first case, $I_{G_{r,1}}$ is a principal ideal whose
Betti numbers are again known.  In the second case, 
$I_{G_{1,1}}= \langle 0 \rangle$.   We can thus restrict our discussion
to $r \geq 3$ and $d \geq 2$. 
\end{remark}

We can give an explicit description of the primitive walks of $G_{r,d}$.
Note that we say two closed even walks $W = (e_0,\ldots,e_{2s-1})$ and
$W' = (e'_0,\ldots,e'_{2s-1})$ are equivalent up to a {\bf circular permutation}
if there exists an $i$ such that $e_j  = e'_{j+i}$ 
(or $e_j = e'_{(2s-j)+i}$ if the cycle is in the reverse order) for all $j$ where
$j+i$ (or $(2s-j)+i$) is taken modulo $2s$.
 
\begin{lemma}\label{all-walks}
Fix integers $r\ge 3$, $d\ge 2$. Let $W$ be a primitive walk of $G_{r,d}$. Then, up to a circular permutation,
$W$ is of one of the following:
\begin{enumerate}
\item[$(i)$] $W = (a_{i},b_{i},b_{j},a_{j})$ where $1 \leq i < j \leq d$, or 
\item[$(ii)$] $W = (a_i,e_1,e_2,\ldots,e_{2r-2},b_i)$ where $1 \leq i \leq d$.
\end{enumerate}
\end{lemma}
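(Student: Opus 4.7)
The plan is to show that any primitive walk of $G_{r,d}$ decomposes into exactly two ``segments'' running between the high-degree vertices $x_1$ and $x_2$, and then to enumerate the resulting configurations.

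First I would observe that every vertex of $G_{r,d}$ other than $x_1$ and $x_2$ has degree exactly two: each $y_i$ is incident only to $a_i$ and $b_i$, and each $z_j$ is incident only to two consecutive edges among $e_1,\ldots,e_{2r-2}$. Since a primitive walk $W$ is minimal by Lemma \ref{closed-car}(i), whenever $W$ arrives at such a degree-two vertex it must exit along the unique other incident edge. Hence every visit of $W$ to a vertex $y_i$ produces the consecutive pair $(a_i,b_i)$ (or its reverse), and every visit to any $z_j$ forces the full traversal of the sub-path $(e_1,e_2,\ldots,e_{2r-2})$ in one of its two orientations. Moreover, $W$ must meet $\{x_1,x_2\}$, since otherwise it would lie in the subgraph induced on $\{y_1,\ldots,y_d,z_1,\ldots,z_{2r-3}\}$, which is a disjoint union of isolated vertices with the path $z_1,z_2,\ldots,z_{2r-3}$, and admits no non-trivial minimal closed walk. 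Up to circular permutation I may then assume $W$ starts at $x_1$.

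This displays $W$ as a concatenation of \emph{segments}, each going from one of $\{x_1,x_2\}$ to the other, with each segment either a $y$-segment of the form $(a_i,b_i)$ or a \emph{path-segment} of the form $(e_1,\ldots,e_{2r-2})$, in either orientation. For $W$ to close up at $x_1$ the number of segments must be even. If $W$ contained $2m \geq 4$ segments, then the first two segments and the remaining $2m-2$ segments would each form a closed even walk based at $x_1$, and Lemma \ref{closed-descom}(ii) would force $W$ to be non-primitive, a contradiction. Hence $W$ consists of exactly two segments.

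Finally I would carry out a three-case analysis on this pair. Two consecutive path-segments yield the sub-sequence $(e_{2r-2},e_{2r-2})$, violating minimality. Two $y$-segments through the same $y_i$ yield $(b_i,b_i)$, again non-minimal; two $y$-segments through distinct vertices $y_i,y_j$ give, after reordering so that $i<j$ and possibly reversing, form (i). A $y$-segment together with a path-segment produces form (ii) up to circular permutation and reversal. The main obstacle is setting up the segment decomposition cleanly; once this is in place, Lemma \ref{closed-descom}(ii) does the real work of bounding the number of segments, and the remaining enumeration is routine.
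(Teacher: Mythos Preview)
Your argument is correct and follows essentially the same route as the paper's proof: both reduce to showing that $W$ traverses exactly two ``legs'' between $x_1$ and $x_2$ and then enumerate the possibilities, invoking Lemma~\ref{closed-car}(i) for minimality and Lemma~\ref{closed-descom}(ii) to rule out more than two legs. The only difference is in framing: the paper observes that deleting $x_1$ (or $x_2$) leaves a tree, hence $W$ must visit each of $x_1,x_2$, and then argues it does so exactly once; you instead use the fact that every other vertex has degree two to force the explicit segment structure, which amounts to the same thing but makes the ``two edges at $x_1$ determine $W$'' step more transparent.
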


\begin{proof}
Observe that if we delete $x_1$ or $x_2$ from $G_{r,d}$, then we are left with 
a tree. Any closed even walk on a tree is not minimal and cannot be primitive 
by Lemma~\ref{closed-car}.  This means $W$ must pass through $x_1$ and $x_2$. 
Further, $W$ must pass through $x_1$ and $x_2$ exactly once, otherwise $W$ 
can be expressed as two consecutive closed even walks, and so it is not 
primitive by Lemma~\ref{closed-descom}.   
We can determine $W$ by selecting two edges adjacent to $x_1$. Since $W$ is 
minimal, the selected edges must be distinct. There are two cases.

In the first case, we
select edges $\{a_i,a_j\}$ adjacent to $x_1$ where $i\neq j$. 
Then a primitive walk passing through $x_1$ and $x_2$ exactly once is of the 
form $(a_{i},b_{i},b_{j},a_{j})$.
In the second case, we select edges $\{a_i,e_1\}$ adjacent to $x_1$. Then a 
primitive walk passing through $x_1$ and $x_2$ exactly once is of the 
form $(a_i,e_1,e_2,\ldots,e_{2r-2},b_i)$.

This describes the primitive walks of $G_{r,d}$ up to a circular permutation. 
\end{proof}

\begin{corollary}\label{groebnerbasis}
Fix integers $r \geq 3$ and $d \geq 2$.
A Gr\"{o}bner basis for $I_{G_{r,d}}$ with respect to any monomial ordering
is given by
\[ \{a_{i}b_{j}-b_{i}a_{j} \mid 1 \leq i <j \leq d\}
\cup 
\{a_{i}e_{2}e_{4} \cdots e_{2r-2} - b_{i}e_{1}e_{3}e_{5} \cdots e_{2r-3} \mid 1 \leq i \leq d\}.\]
\end{corollary}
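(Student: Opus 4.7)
The plan is to combine Lemma~\ref{all-walks} with Theorem~\ref{primitive-theorem} and simply write down the binomial $f_W$ attached to each of the two families of primitive walks classified in Lemma~\ref{all-walks}. By Theorem~\ref{primitive-theorem}, the set $\{f_W \mid W \text{ primitive walk of } G_{r,d}\}$ is a Gr\"obner basis of $I_{G_{r,d}}$ for any monomial order, so the task is purely bookkeeping.

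First I would note that a circular permutation of a closed even walk $W$ produces a binomial that is either $f_W$ itself or $-f_W$ (depending on parity of the shift), and reversing the direction interchanges the two monomials of $f_W$. In either case the binomial generates the same principal ideal, so it suffices to list one representative from each equivalence class described in Lemma~\ref{all-walks}.

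For walks of type $(i)$, $W=(a_i,b_i,b_j,a_j)$ with $1 \le i<j\le d$ has length $4$, so the associated binomial is $f_W = a_i b_j - b_i a_j$, giving the first family in the claimed list. For walks of type $(ii)$, $W=(a_i,e_1,e_2,\ldots,e_{2r-2},b_i)$ has length $2r$, and the edges in odd positions are $a_i,e_2,e_4,\ldots,e_{2r-2}$ while the edges in even positions are $e_1,e_3,\ldots,e_{2r-3},b_i$. Applying the definition of $f_W$ therefore yields
\[
f_W = a_i e_2 e_4 \cdots e_{2r-2} - b_i e_1 e_3 \cdots e_{2r-3},
\]
producing the second family.

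Since Lemma~\ref{all-walks} exhausts all primitive walks up to circular permutation, the union of these two families is (up to sign) the full set of binomials associated to primitive walks of $G_{r,d}$. By Theorem~\ref{primitive-theorem}, this set is a Gr\"obner basis of $I_{G_{r,d}}$ with respect to any monomial order. There is no real obstacle here; the only mild subtlety is verifying that circular permutation and reversal do not produce genuinely new binomials, which is immediate from the sign observation above.
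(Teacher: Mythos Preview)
Your proof is correct and follows essentially the same approach as the paper: invoke Theorem~\ref{primitive-theorem} together with Lemma~\ref{all-walks}, and observe that circular permutation only changes $f_W$ up to sign so that one representative per equivalence class suffices. You simply give more detail in writing out the binomials $f_W$ explicitly, which the paper leaves implicit.
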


\begin{proof}This corollary 
follows from Theorem~\ref{primitive-theorem} and Lemma
\ref{all-walks}.  Note that while Theorem ~\ref{primitive-theorem} 
specifies all primitive walks, if $W$ and $W'$ are two primitive walks 
that are equivalent up to circular permutation, then $f_W \in I_G$
if and only if $f_{W'} \in I_G$.
\end{proof}

\begin{remark} \label{gen-bip-graph}
Note that the set  $\{(a_{i},b_{i},b_{j},a_{j}) ~\mid~ 1 \leq i < j \leq d\}$ 
is the set 
of all primitive walks, up to circular permutation, 
of the complete bipartite graph $K_{2,d}$. 
By Theorem~\ref{primitive-theorem}
the set  $\{a_{i}b_{j}-b_{i}a_{j} \mid 1 \leq i <j \leq d\}$
is a Gr\"obner basis of the toric ideal $I_{K_{2,d}}$ 
with respect to any monomial order. 
\end{remark}

Let $>$ denote the graded reverse lex order on the polynomial ring 
$k[a_1,\ldots,a_d,$ $b_1,\ldots,b_d,$ $e_1,\ldots,e_{2r-2}]$ where the 
variables are ordered in the following way:  
\[
a_1 > \dots > a_d > e_1 > \dots > e_{2r-2} >b_1 > \dots > b_d.
\]
Going forward, we let ${\rm in}(I_{G_{r,d}})$ denote the initial ideal of 
$I_{G_{r,d}}$  with respect to $>$.  Let $\mathcal{G}_{r,d}$ denote the unique 
minimal generating set of ${\rm in}(I_{G_{r,d}})$.  It follows
from Corollary \ref{groebnerbasis} that 
$\mathcal{G}_{r,d} = F_{d} \cup H_{r,d}$ where
\[
F_{d}=\{a_{i}b_{j} \mid 1 \leq j < i \leq d \}\text{ and }H_{r,d}=\{a_{i}e_{2}e_{4} \cdots e_{2r-2} \mid 1 \leq i \leq d\}.
\]
Each monomial of $F_{d}$ has degree two and each monomial of 
$H_{r,d}$ has degree $r$.  Also
\[\left| \mathcal{G}_{r,d} \right|= |F_d|+|H_{r,d}| = \binom{d}{2} + d = \binom{d+1}{2} = \frac{d(d+1)}{2}.\]

Our immediate goal is to show that ${\rm in}(I_{G_{r,d}})$ has linear
quotients.  We order the generators in $\mathcal{G}_{r,d}$ from 
least to greatest using the graded reverse lex order, and write 
these elements as $m_1, \dots ,m_{\frac{d(d+1)}{2}}$. 
That is, 
\begin{eqnarray*}
F_{d} & =&\{m_1, \dots ,m_{\frac{d(d-1)}{2}}\} \\
& = & \{\underbrace{a_{d}b_{d-1}}_{b_{d-1}\text{ as factor}}, \underbrace{a_{d}b_{d-2}, a_{d-1}b_{d-2},}_{b_{d-2} \text{ as factor}} \underbrace{a_{d}b_{d-3} , a_{d-1}b_{d-3} , a_{d-2}b_{d-3}}_{b_{d-3} \text{ as factor}}, \dots , \underbrace{a_{d}b_{1}, a_{d-1}b_{1}, \dots , a_{2}b_{1}}_{b_1 \text{ as factor}}\},
\end{eqnarray*}
and
\begin{eqnarray*}
H_{r,d} & =& \{m_{{\frac{d(d-1)}{2}}+1}, \dots ,m_{\frac{d(d+1)}{2}}\} = \{a_{d}e_2 \cdots e_{2r-2}, a_{d-1}e_2 \cdots e_{2r-2}, \dots, a_{1}e_2 \cdots e_{2r-2}\}.
\end{eqnarray*}

\begin{example}\label{example-in} We illustrate some of the above
ideas using the graph $G_{3,5}$ (see Figure \ref{example_graph}).
A Gr\"obner basis for $I_{G_{3,5}}$ is given by
\[\{a_{5}b_{4}-b_{5}a_{4},a_{5}b_{3}-b_{5}a_{3},a_{4}b_{3}-b_{4}a_{3},a_{5}b_{2}-b_{5}a_{2},a_{4}b_{2}-b_{4}a_{2},
\]
\[\hspace{.25cm} a_{3}b_{2}-b_{3}a_{2},a_{5}b_{1}-b_{5}a_{1},a_{4}b_{1}-b_{4}a_{1}, 
a_{3}b_{1}-b_{3}a_{1},a_{2}b_{1}-b_{2}a_{1},\]
\[
\hspace{.25cm} a_5e_2e_4 - b_5e_1e_3, a_4e_2e_4 - b_4e_1e_3, a_3e_2e_4 - b_3e_1e_3, a_2e_2e_4 - b_2e_1e_3, a_1e_2e_4 - b_1e_1e_3\}.\]
\normalsize
Using the graded reverse lex order, ${\rm in}(I_{G_{3,5}})$
is generated by
the elements of $\mathcal{G}_{3,5}$:
\[
\{a_5 b_4, a_5 b_3, a_4 b_3, a_5 b_2, a_4 b_2, a_3 b_2, a_5 b_1,
     a_4 b_1, a_3 b_1, a_2 b_1, a_5 e_2 e_4, a_4 e_2 e_4, a_3 e_2 e_4,
     a_2 e_2 e_4, a_1 e_2 e_4\},
\]
which we have ordered from smallest to largest.
\end{example}

\begin{theorem}
\label{quotient-prop}
Fix integers $r \geq 3$ and $d \geq 2$.
Then  ${\rm in}(I_{G_{r,d}})$ has linear quotients with respect to  
$\{m_1, \dots ,m_{\frac{d(d+1)}{2}}\}$.  Furthermore
\begin{align*}
 (n_1, \dots ,n_{\frac{d(d+1)}{2}})= (0,\underbrace{1,1}_{2},\underbrace{2,2,2}_{3},\ldots, \underbrace{d-2,d-2,\dots,d-2}_{d-1},\underbrace{d-1,d-1,\ldots,d-1}_{d})
\end{align*} 
where $n_p$ is the number of minimal number of generators of
$\langle m_1,\ldots,m_{p-1} \rangle : \langle m_p \rangle$.
\end{theorem}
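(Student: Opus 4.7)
My plan is a direct computation of each colon ideal $\langle m_1, \ldots, m_{p-1}\rangle : \langle m_p\rangle$, exploiting the standard fact that for monomial ideals this colon is generated by the set $\{m_i/\gcd(m_i, m_p) : 1 \le i < p\}$. For each $p$ I will exhibit an explicit set of variables that generates this colon ideal, thereby proving that $\mathrm{in}(I_{G_{r,d}})$ has linear quotients and, by counting, establishing the stated value of $n_p$. The argument splits naturally into two cases depending on whether $m_p$ lies in $F_d$ or in $H_{r,d}$.

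First, for $m_p = a_k b_j \in F_d$ with $k > j$, the predecessors $m_i = a_{k'}b_{j'}$ (necessarily also in $F_d$) satisfy either $j' > j$, or $j' = j$ and $k' > k$. A direct computation of $m_i/\gcd(m_i, m_p)$ produces three kinds of contributions: the variable $a_{k'}$ for each $k' \in \{k+1, \ldots, d\}$ (from $j' = j$, $k' > k$); the variable $b_{j'}$ for each $j' \in \{j+1, \ldots, k-1\}$ (from $k' = k$, which forces $j' < k$); and mixed monomials $a_{k'} b_{j'}$ from all other predecessors. The mixed monomials are redundant: if $k' > k$ then $a_{k'} b_{j'}$ is divisible by the already-listed $a_{k'}$, while if $k' < k$ then $j' < k' < k$ forces $b_{j'}$ to lie in the second list. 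Hence the colon ideal equals $\langle a_{k+1}, \ldots, a_d, b_{j+1}, \ldots, b_{k-1}\rangle$, giving $n_p = (d-k) + (k-1-j) = d-1-j$. This value is constant across each $b_j$-block of $F_d$ and reproduces the pattern $0, 1, 1, 2, 2, 2, \ldots, d-2, \ldots, d-2$.

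For $m_p = a_k e_2 e_4 \cdots e_{2r-2} \in H_{r,d}$, the predecessors consist of all of $F_d$ together with $a_{k'} e_2 e_4 \cdots e_{2r-2}$ for $k' > k$. The $H_{r,d}$-predecessors contribute the variables $a_{k'}$ for $k' \in \{k+1,\ldots, d\}$ (since the $e_{2\ell}$'s cancel in $\gcd$), the $F_d$-predecessors with $k' = k$ contribute the variables $b_{j'}$ for $j' \in \{1,\ldots, k-1\}$, and all other $F_d$-predecessors contribute mixed monomials $a_{k'} b_{j'}$ that are absorbed by exactly the same redundancy argument as above. Therefore the colon ideal equals $\langle a_{k+1}, \ldots, a_d, b_1, \ldots, b_{k-1}\rangle$ and $n_p = (d-k) + (k-1) = d-1$ for every $m_p \in H_{r,d}$, producing the trailing block of $d$ copies of $d-1$. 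The main bookkeeping obstacle is carefully verifying that the mixed monomials $a_{k'} b_{j'}$ are genuinely redundant in every subcase; once that is dispatched, each colon ideal is an ideal generated by distinct variables, which immediately gives both the linear-quotients property and the minimality of the exhibited generating sets.
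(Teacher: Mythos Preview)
Your proof is correct and follows essentially the same approach as the paper: direct computation of each colon ideal via the formula $\langle m_1,\ldots,m_{p-1}\rangle:\langle m_p\rangle = \langle \mathrm{LCM}(m_i,m_p)/m_p : i<p\rangle$. The paper splits the $F_d$ case into three subcases ($k=d$; $k=j+1$; and $2\le i-j$, $i<d$) and arrives at the same generating sets you give, while you handle all of $F_d$ with the single uniform description $\langle a_{k+1},\ldots,a_d,\,b_{j+1},\ldots,b_{k-1}\rangle$ and make explicit the redundancy argument for the mixed monomials $a_{k'}b_{j'}$, which the paper leaves implicit.
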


\begin{proof}
Let $I(p) = \left\langle m_{1},\ldots,m_{p-1}\right\rangle : 
\left\langle  m_{p} \right\rangle$, for 
$p\in {\{2,\ldots, \frac{d(d+1)}{2}\}}$.  A generating set of $I(p)$ is 
given by:
\begin{align*}
I(p) = \left\langle \frac{LCM(m_1,m_p)}{m_p}, \frac{LCM(m_2,m_p)}{m_p}, \ldots, \frac{LCM(m_{p-1},m_p)}{m_p}\right\rangle.
\end{align*}
We show that this generating set consists of linear forms. We consider four 
cases depending on the form of $m_p$.

\noindent Case 1: If $m_p = a_{d}b_{j}$, then
\[
I(p)=\langle a_{d}b_{d-1}, \dots, a_{d}b_{j+1}, \dots, a_{j}b_{j+1}\rangle: \langle a_{d}b_{j} \rangle=\langle b_{d-1}, \dots, b_{j+1}\rangle.
\]

\noindent 
Case 2: If $m_p = a_{j+1}b_j$ with $j+1 < d$, then
\[
I(p)=\langle a_{d}b_{d-1}, a_{d}b_{d-2}, a_{d-1}b_{d-2}, \dots , a_{d}b_{j}, \dots , a_{j+2}b_{j} \rangle: \langle a_{j+1}b_j \rangle =\langle a_{j+2}, \dots, a_d\rangle.
\]

\noindent 
Case 3: If $m_p = a_{i}b_{j}$ with $i - j \geq 2$ and $i < d$, then
\begin{align*}
I(p)=&\langle a_{d}b_{d-1}, a_{d}b_{d-2}, \dots, a_{d}b_{j}, 
\dots, a_{i+1}b_{j}\rangle: \langle a_{i}b_{j} \rangle =\langle a_d, \dots , a_{i+1}, b_{i-1}, \dots , b_{j+1}\rangle.
\end{align*}

\noindent 
Case 4: If $m_p = a_ie_2e_4\cdots e_{2r-2}$ for $1\leq i \leq d$, then
\begin{align*}
I(p)=&\langle a_{d}b_{d-1}, a_{d}b_{d-2}, a_{d-1}b_{d-2}, \dots a_{d}b_{1}, a_{d-1}b_{1}, \dots , a_{2}b_{1},\\
& a_de_2e_4\cdots e_{n-2}, \dots , a_{i+1}e_2e_4\cdots e_{2r-2}\rangle: \langle a_i e_2e_4\cdots e_{2r-2} \rangle\\
=&  \begin{cases}
    \langle a_d, \dots , a_2\rangle & \text{for } i = 1 \\
    \langle a_d, \dots , a_{i+1}, b_{i-1}, \dots , b_1\rangle & \text{for } 1 < i < d \\
    \langle b_{d-1}, b_{d-2}, \dots , b_1\rangle  & \text{for } i = d
  \end{cases}
\end{align*}
From the four cases, we see that ${\rm  in}(I_{G_{r,d}})$ has linear quotients.

Let $n_p$ be the number of minimal generators for $I(p)$ as computed above. 
In Cases 1-3, we have $n_p$ = $d - j - 1$ where $j$
is the subscript of the $b$-variable, and in Case 4, we have $n_p$ = $d-1$. 
From our order of $m_1, \dots ,m_{\frac{d(d-1)}{2}}$, the first $\frac{d(d-1)}{2}$
generators,
the subscript $j$ of the $b$-variable follows the order
\begin{align*}
 d-1,\underbrace{d-2,d-2}_{2},\underbrace{d-3,d-3,d-3}_{3},\ldots,\underbrace{1,1,1,\ldots,1}_{d-1}.
\end{align*}
The statement now follows.
\end{proof}

\begin{theorem}
\label{betti-numb-initial-ideal}
Fix integers $r \geq 3$ and $d \geq 2$.
The graded Betti numbers  
of ${\rm in}(I_{G_{r,d}})$ are: 
\begin{eqnarray*}
\beta_{i,i+2}({\rm in}(I_{G_{r,d}})) & =& 
(i+1)\binom{d}{i+2}~~\mbox{for $0 \leq i \leq d-2$}, \\
\beta_{i,i+r}({\rm in}(I_{G_{r,d}})) & =& d\binom{d-1}{i}~~\mbox{for $0 \leq i \leq d-1$}, 
\end{eqnarray*}
and $\beta_{i,i+j}({\rm in}(I_{G_{r,d}}))=0$ everywhere else.  
\end{theorem}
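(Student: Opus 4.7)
The plan is to apply the Sharifan--Varbaro formula (Theorem~\ref{quotient-compute}) using the linear quotients structure and the explicit $n_p$ sequence just established in Theorem~\ref{quotient-prop}. Since the minimal generators of ${\rm in}(I_{G_{r,d}})$ split into $F_d$ (degree $2$) and $H_{r,d}$ (degree $r$), the only possible nonzero Betti numbers occur in degrees $i+2$ and $i+r$, which immediately explains the vanishing claim.

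First I would handle the degree-$r$ case. Every generator in $H_{r,d}$ has $n_p = d-1$ and there are exactly $d$ such generators, so Theorem~\ref{quotient-compute} yields
\[
\beta_{i,i+r}({\rm in}(I_{G_{r,d}})) = \sum_{p:\deg m_p=r}\binom{n_p}{i} = d\binom{d-1}{i},
\]
for $0\le i\le d-1$, which is immediate.

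Next I would tackle the degree-$2$ case. The sequence $(n_p)$ restricted to $F_d$ groups the value $k$ into a block of length $k+1$, for $k=0,1,\ldots,d-2$. Applying Theorem~\ref{quotient-compute} gives
\[
\beta_{i,i+2}({\rm in}(I_{G_{r,d}})) = \sum_{k=0}^{d-2}(k+1)\binom{k}{i}.
\]
The key combinatorial step is the identity $(k+1)\binom{k}{i}=(i+1)\binom{k+1}{i+1}$, which is a direct factorial rewrite. Substituting and re-indexing with $m=k+1$, the sum becomes
\[
(i+1)\sum_{m=i+1}^{d-1}\binom{m}{i+1} = (i+1)\binom{d}{i+2},
\]
by the hockey-stick identity. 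This gives the claimed formula for $0\le i\le d-2$.

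There is no real obstacle here: once Theorem~\ref{quotient-prop} is in hand, the whole proof reduces to plugging the $n_p$ sequence into Theorem~\ref{quotient-compute} and verifying one binomial identity. The only subtlety worth double-checking is that the two degree ranges truly do not overlap for small $r$ (they do not, since $r\ge 3$ forces $r\ne 2$), so the contributions from $F_d$ and $H_{r,d}$ live in separate rows of the Betti table and cannot interfere.
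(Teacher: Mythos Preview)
Your proposal is correct and follows essentially the same approach as the paper's proof: both plug the $n_p$ sequence from Theorem~\ref{quotient-prop} into Theorem~\ref{quotient-compute}, handle the degree-$r$ strand by a direct count, and reduce the degree-$2$ strand to the same binomial identity $(k+1)\binom{k}{i}=(i+1)\binom{k+1}{i+1}$ followed by the hockey-stick sum $\sum_m \binom{m}{i+1}=\binom{d}{i+2}$. Your explicit remark that $r\ge 3$ keeps the two strands disjoint is a useful clarification that the paper leaves implicit.
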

\begin{proof}
Theorem~\ref{quotient-prop} shows ${\rm in}(I_{G_{r,d}})$ has linear quotients 
with respect to the ordered set $\{m_1, \ldots, m_{\frac{d(d+1)}{2}}\}$. By Theorem ~\ref{quotient-compute}, 
$\beta_{i,i+j}({\rm in}(I_{G_{r,d}})) \neq 0$ only if $j\in{\{2,r\}}$.

We now consider the case $j=2$.   By Theorem \ref{quotient-prop} 
the first $\frac{d(d-1)}{2}$ degree two generators satisfy 
$(n_1, \dots ,n_{\frac{d(d-1)}{2}})= 
(0,1,1,2,2,2,\ldots, \underbrace{d-2,d-2,\dots,d-2}_{d-1})$. 
By Theorem ~\ref{quotient-compute} it follows that:
\begin{align*}
\beta_{i,i+2}({\rm in}(I_{G_{r,d}}))&=\sum_{p=1}^{\frac{d(d-1)}{2}} \binom{n_{p}}{i} =\binom{0}{i} + 2\binom{1}{i} + 3\binom{2}{i} + \ldots + (d-1)\binom{d-2}{i}\\
&=\sum_{q=1}^{d-1} q\binom{q-1}{i} = (i+1)\sum_{q=1}^{d-1} \binom{q}{i+1}
 = (i+1)\binom{d}{i+2}.
\end{align*}
The second-to-last equality follows from the identity $k\binom{a}{k} = 
a\binom{a-1}{k-1}$, and the last equality follows from the identity 
$\sum_{j=1}^{a}\binom{j}{b}=\binom{a+1}{b+1}$ for $b > 0$.

When  $j=r$, there are $d$ degree $r$ generators each with $n_{p}=d-1$, so 
we obtain:
\[
\beta_{i,i+r}({\rm in}(I_{G_{r,d}}))=\sum_{p={\frac{d(d-1)}{2}}+1}^{{\frac{d(d+1)}{2}}} \binom{n_{p}}{i} = d \binom{d-1}{i}.\] 
This now completes the proof.
\end{proof}

We can now give a new proof of \cite[Theorem 5.1]{BOVT}.

\begin{corollary} \label{betti-bipart}
Let $K_{2,d}$ be the complete bipartite graph
with $d \geq 2$.  Then ${\rm in}(I_{K_{2,d}})$ has linear quotients, and the graded Betti numbers of $I_{K_{2,d}}$ are given by:
\[\beta_{i,i+2}(I_{K_{2,d}})=\beta_{i,i+2}({\rm in}(I_{K_{2,d}}))= 
(i+1) \binom{d}{i+2} ~\mbox{for $0 \leq i \leq d-2$,}\]
and $\beta_{i,i+j}(I_{K_{2,d}})=0$ everywhere else. 
\end{corollary}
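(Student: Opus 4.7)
The plan is to imitate the argument used for $I_{G_{r,d}}$ in Theorems \ref{quotient-prop} and \ref{betti-numb-initial-ideal}, but restricted to the ``$F_d$-part'' of the generators, and then to upgrade the Betti numbers of the initial ideal to those of $I_{K_{2,d}}$ itself via Lemma \ref{adam-lemma}. Concretely, by Remark \ref{gen-bip-graph}, a Gr\"obner basis of $I_{K_{2,d}}$ with respect to any monomial order is $\{a_ib_j - b_ia_j \mid 1\leq i<j\leq d\}$, so under the same graded reverse lex order used earlier one has ${\rm in}(I_{K_{2,d}}) = \langle F_d\rangle$, where $F_d$ is exactly the set of degree-two generators ordered as $m_1,\ldots,m_{d(d-1)/2}$ in Section \ref{betti-numbers-Grd}.

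First I would verify that ${\rm in}(I_{K_{2,d}})$ has linear quotients with respect to this ordering. This is immediate from the proof of Theorem \ref{quotient-prop}: only Cases 1--3 appear (Case 4 is absent since there are no $H_{r,d}$ generators), and the computation of the colon ideals $\langle m_1,\ldots,m_{p-1}\rangle:\langle m_p\rangle$ produces exactly the same linear generating sets as in that theorem. In particular the sequence of minimal generator counts is
\[
(n_1,\ldots,n_{d(d-1)/2}) = \bigl(0,\underbrace{1,1}_{2},\underbrace{2,2,2}_{3},\ldots,\underbrace{d-2,\ldots,d-2}_{d-1}\bigr).
\]

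Next, I would plug this data into Theorem \ref{quotient-compute}. Since every generator has degree $2$, only $\beta_{i,i+2}({\rm in}(I_{K_{2,d}}))$ can be nonzero, and the identical manipulation appearing in Theorem \ref{betti-numb-initial-ideal} (using $k\binom{a}{k}=a\binom{a-1}{k-1}$ and $\sum_{j=1}^{a}\binom{j}{b}=\binom{a+1}{b+1}$) yields
\[
\beta_{i,i+2}({\rm in}(I_{K_{2,d}})) = \sum_{q=1}^{d-1} q\binom{q-1}{i} = (i+1)\binom{d}{i+2}
\]
for $0\leq i\leq d-2$, and zero otherwise.

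Finally, I would transfer these numbers to $I_{K_{2,d}}$ itself. By Theorem \ref{in-bound}, $\beta_{i,i+j}(I_{K_{2,d}})\leq \beta_{i,i+j}({\rm in}(I_{K_{2,d}}))$, so the left-hand side vanishes whenever $j\neq 2$. Thus the hypothesis of Lemma \ref{adam-lemma} is satisfied with $k=2$, and we conclude $\beta_{i,i+2}(I_{K_{2,d}}) = \beta_{i,i+2}({\rm in}(I_{K_{2,d}}))$ for all $i$. The only mild subtlety is being sure that the colon-ideal computations in Cases 1--3 of Theorem \ref{quotient-prop} really only invoke the $F_d$-generators and never use any element of $H_{r,d}$; a quick inspection of those cases shows this is so, which is why the identical linear-quotients statement goes through for $K_{2,d}$ with no modification.
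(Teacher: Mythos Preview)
Your proposal is correct and follows essentially the same approach as the paper: identify ${\rm in}(I_{K_{2,d}})=\langle F_d\rangle$ via Remark~\ref{gen-bip-graph}, rerun Cases~1--3 of Theorem~\ref{quotient-prop} and the degree-two computation in Theorem~\ref{betti-numb-initial-ideal} to get linear quotients and the Betti numbers of the initial ideal, then use Theorem~\ref{in-bound} together with Lemma~\ref{adam-lemma} (with $k=2$) to pass to $I_{K_{2,d}}$. Your added remark that Cases~1--3 never invoke elements of $H_{r,d}$ is a welcome clarification but is exactly the content of the paper's phrase ``using only the members of $F_d$.''
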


\begin{proof}
By Remark \ref{gen-bip-graph}, 
$\{a_ib_j - b_i a_j \mid 1 \leq i < j \leq d\}$ 
 is a Gr\"obner basis of the toric 
ideal $I_{K_{2,d}}$ for any monomial order, so that 
${\rm in}(I_{K_{2,d}})=\left\langle F_{d} \right\rangle$.
By repeating the arguments of Theorems \ref{quotient-prop} and 
\ref{betti-numb-initial-ideal}, using only the members of $F_{d}$, 
${\rm in}(I_{K_{2,d}})$ has linear quotients and graded 
Betti numbers:
\[\beta_{i,i+2}({\rm in}(I_{K_{2,d}}))= (i+1) \binom{d}{i+2}
~~\mbox{for $0 \leq i \leq d-2$,}\]
and $\beta_{i,i+k}({\rm in}(I_{K_{2,d}})) = 0$ for all $k \neq 2$.
By Theorem \ref{in-bound}, $\beta_{i,i+k}(I_{K_{2,d}}) = 
\beta_{i,i+k}({\rm in}(I_{K_{2,d}})) = 0$ for all $k \neq 2$.  If we apply
 Lemma \ref{adam-lemma}, we get $\beta_{i,j}(I_{K_{2,d}}) = 
\beta_{i,j}({\rm in}(I_{K_{2,d}}))$ for all $i,j \geq 0$.
\end{proof}

We now come to our main theorem.

\begin{theorem}
\label{betti-full}
Fix integers $r \geq 3$ and $d \geq 2$.
Then 
\[\beta_{i,i+j}(I_{G_{r,d}}) = \beta_{i,i+j}({\rm in}(I_{G_{r,d}})) ~~\mbox{for all 
$i,j \geq 0$}.\] 
In particular, $\beta_{i,i+j}(I_{G_{r,d}})$ can be computed using
Theorem \ref{betti-numb-initial-ideal}.
\end{theorem}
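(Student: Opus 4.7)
The plan is to combine Theorems~\ref{in-bound} and~\ref{betti-numb-initial-ideal}, Corollary~\ref{betti-bipart}, and Lemma~\ref{adam-lemma}. By Theorem~\ref{betti-numb-initial-ideal} the Betti table of ${\rm in}(I_{G_{r,d}})$ is concentrated in the two rows $j=2$ and $j=r$, so Theorem~\ref{in-bound} immediately forces $\beta_{i,i+j}(I_{G_{r,d}}) = 0$ for every $j \notin \{2, r\}$. Hence it suffices to prove equality in row $j = 2$: Lemma~\ref{adam-lemma} applied with $k = r$ will then yield equality in row $j = r$ and conclude the argument.

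To handle row $j = 2$, I would compare $I_{G_{r,d}}$ with the subideal $J = I_{K_{2,d}} R \subseteq I_{G_{r,d}}$ generated by the degree-$2$ primitive binomials $a_i b_j - b_i a_j$, $1 \le i < j \le d$. By Corollary~\ref{groebnerbasis} the remaining primitive binomials of $I_{G_{r,d}}$ all have degree $r$, so the quotient $M = I_{G_{r,d}}/J$ is generated in degree $r \ge 3$. A standard minimality argument then shows that the minimal generators of the $i$-th syzygy module of $M$ live in degrees $\ge r + i$, and hence $\beta_{i,i+2}(M) = \beta_{i+1,i+2}(M) = 0$ for every $i \ge 0$. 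In the long exact sequence of ${\rm Tor}(-,k)$ attached to
\[
0 \to J \to I_{G_{r,d}} \to M \to 0,
\]
the internal degree $i+2$ strand therefore collapses to an isomorphism $\beta_{i,i+2}(J) \cong \beta_{i,i+2}(I_{G_{r,d}})$.

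Because extending scalars from $k[E(K_{2,d})]$ to $R$ does not alter Betti numbers, $\beta_{i,i+2}(J)$ equals $\beta_{i,i+2}(I_{K_{2,d}})$, which Corollary~\ref{betti-bipart} evaluates as $(i+1)\binom{d}{i+2}$---exactly the value of $\beta_{i,i+2}({\rm in}(I_{G_{r,d}}))$ supplied by Theorem~\ref{betti-numb-initial-ideal}. This settles row $j = 2$; Lemma~\ref{adam-lemma} then supplies row $j = r$ and the theorem follows.

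The only genuinely non-formal step is the degree estimate $\beta_{i,j}(M) = 0$ for $j < r + i$, which is where the hypothesis $r \ge 3$ actually enters; once that is in place the rest is bookkeeping. One could instead replace the short exact sequence argument with the general fact that the $2$-linear strand of a graded ideal depends only on its degree-$2$ generators, but the approach above keeps all invocations inside the paper's own machinery.
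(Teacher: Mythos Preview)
Your proposal is correct and follows essentially the same route as the paper: reduce to $j\in\{2,r\}$ via Theorems~\ref{in-bound} and~\ref{betti-numb-initial-ideal}, identify row $j=2$ with the Betti numbers of $I_{K_{2,d}}$ using Corollary~\ref{betti-bipart}, and finish with Lemma~\ref{adam-lemma}. The only cosmetic difference is that the paper justifies $\beta_{i,i+2}(I_{G_{r,d}})=\beta_{i,i+2}(I_{K_{2,d}})$ by the one-line observation that the $2$-linear strand depends only on the degree-$2$ generators (precisely the alternative you mention at the end), whereas you unpack this via the short exact sequence and the Tor vanishing for $M$.
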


\begin{proof}
By Theorems \ref{in-bound} and \ref{betti-numb-initial-ideal}, we 
have $\beta_{i,i+j}(I_{G_{r,d}}) = \beta_{i,i+j}({\rm in}(I_{G_{r,d}})) = 0$ for
all $j \neq 2,r$.  
By Remark~\ref{gen-bip-graph}, 
$I_{K_{2,d}}= \langle a_ib_j - b_i a_j \mid 1 \leq i < j \leq d \rangle $. 
By Corollary \ref {groebnerbasis}, the generators of $I_{K_{2,d}}$ 
are the same as the degree two generators of $I_{G_{r,d}}$.  Because
both $I_{K_{2,d}}$ and
$I_{G_{r,d}}$ have no linear generators, we have 
$\beta_{i,i+2}(I_{G_{r,d}})=\beta_{i,i+2}(I_{K_{2,d}})$ for all $i \geq 0$, 
i.e., they have the same linear strand.   By 
Theorems~\ref{betti-numb-initial-ideal} and \ref{betti-bipart}, 
$\beta_{i,i+2}({\rm in}(I_{G_{r,d}}))=\beta_{i,i+2}(I_{K_{2,d}})$, which gives 
$\beta_{i,i+2}(I_{G_{r,d}})=\beta_{i,i+2}({\rm in}(I_{G_{r,d}}))$
for all $i \geq 0$.  The conclusion now follows by applying
Lemma \ref{adam-lemma} since we have shown that 
$\beta_{i,i+j}(I_{G_{r,d}}) = \beta_{i,i+j}({\rm in}(I_{G_{r,d}})$ for all $j \neq r$.
\end{proof}


\section{Consequences}\label{consequences}
In this section we record some consequences of Theorem \ref{betti-full}.

We first deduce some bounds on the regularity and projective dimension
for the toric ideal of any graph $G$ using Theorem \ref{betti-full}.   
The {\bf (Castelnuovo-Mumford) regularity} of any ideal $I$ is defined
to be ${\rm reg}(I) = \max\{j-i ~|~ \beta_{i,j}(I) \neq 0\}$, while
the {\bf projective dimension} is ${\rm pdim}(I) = \max\{i ~|~
\beta_{i,j}(I) \neq 0\}$.  Recall that $H$ is an {\bf induced subgraph}
of $G$ if there exits a subset $W \subseteq V(G)$ such
that $H = (W,E(W))$ where $E(W) = \{ e \in E(G) ~|~ e \subseteq W\}$.

\begin{theorem}\label{invariants}
Let $G$ be a graph, and suppose that $G$
has an induced subgraph of the form $H = G_{r_1,d_1} \sqcup G_{r_2,d_2}
\sqcup \cdots \sqcup G_{r_s,d_s}$, i.e., $s$ disjoint subgraphs of the
form $G_{r,d}$ with $r\ge 3$ and $d\ge 2$.  Then
\begin{enumerate}
\item[$(i)$] ${\rm reg}(I_G) \geq  r_1 + \cdots + r_s -s+1$, and
\item[$(ii)$] ${\rm pdim}(I_G) \geq  d_1+\cdots +d_s -1$.
\end{enumerate}
\end{theorem}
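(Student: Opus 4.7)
The plan is to reduce Theorem \ref{invariants} to an exact computation of ${\rm reg}(I_H)$ and ${\rm pdim}(I_H)$ by invoking the induced-subgraph Betti number inequality of Beyarslan, H\`a, and O'Keefe \cite{BHO} alluded to in the introduction: since $H$ is an induced subgraph of $G$, one has $\beta_{i,j}(I_H) \leq \beta_{i,j}(I_G)$ for all $i,j$, so in particular ${\rm reg}(I_H) \leq {\rm reg}(I_G)$ and ${\rm pdim}(I_H) \leq {\rm pdim}(I_G)$. It therefore suffices to pin down the regularity and projective dimension of $I_H$ exactly.

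First, I would read off from Theorems~\ref{betti-numb-initial-ideal} and \ref{betti-full} that $\beta_{i,i+j}(I_{G_{r_k,d_k}}) \neq 0$ precisely when $j \in \{2, r_k\}$, with maximal homological degree $d_k-1$ attained on the strand $j-i=r_k$. Hence ${\rm reg}(I_{G_{r_k,d_k}}) = r_k$ and ${\rm pdim}(I_{G_{r_k,d_k}}) = d_k-1$. Next, because the components of $H$ are vertex-disjoint and $H$ is induced in $G$, the edge variable sets $E(G_{r_k,d_k})$ are pairwise disjoint and every primitive closed walk of $H$ stays in a single component; consequently $I_H = \sum_{k=1}^{s} I_{G_{r_k,d_k}}$ is a sum of ideals in disjoint blocks of variables, and
\[
k[E(H)]/I_H \;\cong\; \bigotimes_{k=1}^{s} k[E(G_{r_k,d_k})]/I_{G_{r_k,d_k}}.
\]
The tensor product over $k$ of the minimal graded free resolutions of the factors is itself a minimal graded free resolution of $k[E(H)]/I_H$, so regularity and projective dimension are additive on the factors: ${\rm reg}(k[E(H)]/I_H) = \sum (r_k-1)$ and ${\rm pdim}(k[E(H)]/I_H) = \sum d_k$. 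Shifting back to the ideal gives ${\rm reg}(I_H) = \sum r_k - s + 1$ and ${\rm pdim}(I_H) = \sum d_k - 1$, matching both right-hand sides in (i) and (ii).

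Combining these two equalities with the BHO inequality produces both bounds. The step to be careful about is not any single deep input but rather the bookkeeping around the decomposition: one has to check that the induced-subgraph hypothesis really forces $I_H$ to split as a sum of ideals in disjoint variable blocks (which follows from the primitive walk description in Lemma~\ref{all-walks} applied componentwise), and that the Künneth-style behavior of Betti numbers under the tensor product of resolutions gives the additive formulas for ${\rm reg}$ and ${\rm pdim}$. Both are standard, so the substantive input to the argument is really just the exact Betti table of $I_{G_{r,d}}$ supplied by Theorem~\ref{betti-full} together with \cite{BHO}.
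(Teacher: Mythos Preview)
Your proposal is correct and follows essentially the same approach as the paper: compute ${\rm reg}(I_{G_{r,d}})=r$ and ${\rm pdim}(I_{G_{r,d}})=d-1$ from Theorem~\ref{betti-full}, then pass to $G$ via the induced-subgraph inequality of \cite{BHO}. The only difference is packaging: the paper cites \cite[Theorem 3.7]{BHO} once to cover both the induced-subgraph comparison and the additivity over disjoint components, whereas you unpack the latter explicitly via the tensor-product (K\"unneth) argument for $k[E(H)]/I_H \cong \bigotimes_k k[E(G_{r_k,d_k})]/I_{G_{r_k,d_k}}$; both routes yield the same bounds.
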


\begin{proof}
By Theorem \ref{betti-full}, ${\rm reg}(I_{G_{r,d}}) = r$
and ${\rm pdim}(I_{G_{r,d}}) = d-1$.  Now apply \cite[Theorem 3.7]{BHO}.
\end{proof}

Next, we show that $R/I_{G_{r,d}}$ is Cohen-Macaulay.

\begin{theorem} 
\label{CM}
Fix integers $r \geq 3$ and $d \geq 2$, and let $R = k[E(G_{r,d})]$.
Then $R/I_{G_{r,d}}$ is a Cohen-Macaulay ring with Krull dimension 
${\rm dim}(R/I_{G_{r,d}}) = d + 2r -2$.
\end{theorem}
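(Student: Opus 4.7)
The plan is to apply the Auslander--Buchsbaum formula, combining the projective dimension that can be read off from Theorem~\ref{betti-full} with a direct computation of the Krull dimension of $R/I_{G_{r,d}}$.

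First I would compute $\mathrm{pd}(R/I_{G_{r,d}})$. By Theorems~\ref{betti-full} and~\ref{betti-numb-initial-ideal}, the nonzero Betti numbers of $I_{G_{r,d}}$ occur only in the internal degrees $j\in\{2,r\}$, and the largest homological index carrying a nonzero Betti number is realized by
\[
\beta_{d-1,\,d-1+r}(I_{G_{r,d}}) \;=\; d\binom{d-1}{d-1} \;=\; d \;\neq\; 0,
\]
while the linear strand only reaches index $d-2$. Hence $\mathrm{pd}(I_{G_{r,d}})=d-1$, and therefore $\mathrm{pd}(R/I_{G_{r,d}})=d$.

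Next I would compute the Krull dimension. The graph $G_{r,d}$ is bipartite: put $x_1,x_2$ together with the $z_i$ of even index into one color class, and $y_1,\dots,y_d$ together with the $z_i$ of odd index into the other. The hypothesis that the added path has \emph{even} length $2r-2$ is exactly what makes the two endpoints of the path land in the same class as $x_1,x_2$, so the coloring is proper. Since $G_{r,d}$ is also connected, the standard formula for the dimension of the toric edge ring of a connected bipartite graph (the edge lattice has corank one, coming from the single bipartition) gives
\[
\dim R/I_{G_{r,d}} \;=\; |V(G_{r,d})|-1 \;=\; \bigl(2 + d + (2r-3)\bigr)-1 \;=\; d+2r-2.
\]

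Finally, $R$ is a polynomial ring in $|E(G_{r,d})| = 2d+2r-2$ variables, so the Auslander--Buchsbaum formula yields
\[
\mathrm{depth}(R/I_{G_{r,d}}) \;=\; \dim R - \mathrm{pd}(R/I_{G_{r,d}}) \;=\; (2d+2r-2) - d \;=\; d+2r-2,
\]
which matches the Krull dimension, so $R/I_{G_{r,d}}$ is Cohen--Macaulay. The main point requiring care is not computational but bibliographic: one needs the classical fact that $\dim k[E(G)]/I_G = |V(G)|-1$ for a connected bipartite graph (a result of Villarreal). Once this is cited, the Cohen--Macaulay property is a direct consequence of the Betti number formula already established in Theorem~\ref{betti-full}.
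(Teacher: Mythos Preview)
Your argument is correct, but it proceeds in the opposite logical direction from the paper's own proof. The paper first establishes Cohen--Macaulayness \emph{independently} of the Betti number computation: since the initial ideal of $I_{G_{r,d}}$ (with respect to any monomial order) is square-free by Corollary~\ref{groebnerbasis}, Sturmfels' criterion shows $R/I_{G_{r,d}}$ is normal, and Hochster's theorem then gives Cohen--Macaulayness. Only \emph{afterwards} does the paper invoke Auslander--Buchsbaum, using the known projective dimension $d$ to deduce the Krull dimension $d+2r-2$.

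You reverse this: you compute the Krull dimension first, via the classical Villarreal formula $\dim k[E(G)]/I_G = |V(G)|-1$ for a connected bipartite $G$ (your 2-coloring of $G_{r,d}$ is correct), and then use Auslander--Buchsbaum to compute the depth and match it against the dimension. Your route is arguably more elementary, since it avoids the Sturmfels normality criterion and Hochster's theorem, replacing them with a single dimension formula for bipartite edge rings. On the other hand, the paper's route yields the extra conclusion that $R/I_{G_{r,d}}$ is \emph{normal}, a fact your argument does not recover and which is relevant to the discussion of Stanley's conjecture in Remark~\ref{rem:Stanley-conj}.
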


\begin{proof}
By Corollary \ref{groebnerbasis} the initial ideal of $I_{G_{r,d}}$
with respect to any monomial order is a square-free monomial ideal. 
A result of Sturmfels \cite[Proposition 13.15]{S} implies that 
$R/I_{G_{r,d}}$ is normal, so thus by a result of Hochster 
(see \cite[10]{H} or \cite[Theorem 5.17]{EH}), $R/I_{G_{r,d}}$ is a 
Cohen-Macaulay ring.

Since $R/I_{G_{r,d}}$ is Cohen-Macaulay, the 
Auslander-Buchsbaum formula \cite[Section 1.3]{BH} implies
$
{\rm dim}(R/I_{G_{r,d}}) = {\rm dim}(R) - {\rm pdim}(R/I_{G_{r,d}}).$
Here the Krull dimension of $R$ equals the number of edges in $G_{r,d}$, 
i.e., ${\rm dim}(R)=2d + 2r - 2$.  
Since ${\rm pdim}(R/I_{G_{r,d}}) =d$ by Theorem \ref{invariants},
the conclusion now holds.
\end{proof}

We can also use Theorem \ref{betti-full} to compute the Hilbert
series of $k[E(G_{r,d})]/I_{G_{r,d}}$.

\begin{theorem}
\label{h-vector-theorem}
Fix integers $r \geq 3$ and $d \geq 2$, and let $R = k[E(G_{r,d})]$.
Then the Hilbert series of $R/I_{G_{r,d}}$ is
\begin{eqnarray*}
HS_{R/I_{G_{r,d}}}(t) &= &\frac{1 + dt + dt^2 + \cdots + dt^{r-1}}{(1-t)^{d+2r-2}}.
\end{eqnarray*}
In particular, the h-vector of $R/I_{G_{r,d}}$ is $(1,d,\ldots,d)$.
\end{theorem}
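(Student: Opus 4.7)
The plan is to read off the Hilbert series directly from the graded Betti numbers computed in Theorems \ref{betti-full} and \ref{betti-numb-initial-ideal}, and then to massage the resulting rational function into the asserted closed form. The starting point is the standard identity
\[
HS_{R/I_{G_{r,d}}}(t) = \frac{1-\sum_{i\ge 0}(-1)^{i}\sum_{j}\beta_{i,j}(I_{G_{r,d}})\,t^{j}}{(1-t)^{N}},
\]
where $N = 2d+2r-2$ is the number of edges of $G_{r,d}$. By Theorem \ref{betti-full}, only the bidegrees $(i,i+2)$ and $(i,i+r)$ contribute, so the numerator splits as $1 + A(t) + B(t)$ with
\[
A(t) = -\sum_{i=0}^{d-2}(-1)^{i}(i+1)\binom{d}{i+2}t^{i+2}, \qquad B(t) = -d\,t^{r}\sum_{i=0}^{d-1}(-1)^{i}\binom{d-1}{i}t^{i}.
\]

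Next I would simplify each piece. The series $B(t)$ collapses by the binomial theorem to $B(t) = -d\,t^{r}(1-t)^{d-1}$. For $A(t)$, I would reindex $k = i+2$ and split $(k-1)\binom{d}{k}=d\binom{d-1}{k-1}-\binom{d}{k}$; the binomial theorem then yields
\[
A(t) = d\,t(1-t)^{d-1} + (1-t)^{d} - 1,
\]
after collecting the low-order $k=0,1$ boundary terms. Adding the $1$ from the numerator cancels the stray $-1$ and produces
\[
1 + A(t) + B(t) = (1-t)^{d-1}\bigl[\,1 + (d-1)t - d\,t^{r}\,\bigr].
\]

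The final step is the telescoping identity $1 + (d-1)t - d\,t^{r} = (1-t)(1 + dt + dt^{2} + \cdots + dt^{r-1})$, which one verifies by expanding the right-hand side and observing that the intermediate coefficients cancel. This rewrites the numerator as $(1-t)^{d}(1 + dt + dt^{2} + \cdots + dt^{r-1})$; cancelling $(1-t)^{d}$ against the denominator $(1-t)^{N}= (1-t)^{2d+2r-2}$ leaves
\[
HS_{R/I_{G_{r,d}}}(t) = \frac{1 + dt + dt^{2} + \cdots + dt^{r-1}}{(1-t)^{d+2r-2}}.
\]
The resulting exponent $d+2r-2$ of $(1-t)$ matches the Krull dimension computed in Theorem \ref{CM}, confirming that the denominator is written in lowest terms, and the $h$-vector $(1,d,\ldots,d)$ is read off from the numerator. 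The only real obstacle is the bookkeeping for $A(t)$, but that is a routine application of $k\binom{d}{k}=d\binom{d-1}{k-1}$ together with the binomial theorem.
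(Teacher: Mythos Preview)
Your proposal is correct and follows essentially the same approach as the paper: compute the Hilbert series numerator from the graded Betti numbers of Theorems~\ref{betti-numb-initial-ideal} and~\ref{betti-full}, collapse the two strands via binomial identities to reach $(1-t)^{d-1}\bigl[1+(d-1)t-dt^{r}\bigr]$, and then factor out one more $(1-t)$. The only cosmetic differences are that the paper uses the split $(i-1)\binom{d}{i}=(d-1)\binom{d-1}{i-1}-\binom{d-1}{i}$ instead of your $(k-1)\binom{d}{k}=d\binom{d-1}{k-1}-\binom{d}{k}$, and it obtains the final factorization via $1-t^{r}=(1-t)(1+\cdots+t^{r-1})$ rather than by direct expansion; neither difference is material.
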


\begin{proof}
Recall that $\beta_{i+1,j}(R/I_{G_{r,d}}) = \beta_{i,j}(I_{G_{r,d}})$
for all $i, j\geq 0$.  In particular, Theorem \ref{betti-full},
also gives the graded Betti numbers of the quotient $R/I_{G_{r,d}}$,
where $\beta_{0,0}(R/I_{G_{r,d}})= 1$.   
  
By \cite[Proposition 4.27]{EH} the Hilbert series satisfies
\begin{eqnarray*}
HS_{R/I_{G_{r,d}}}(t) &= &\frac{\sum_{i}(-1)^i\left(\sum_j \beta_{i,j}t^{j}\right)}{(1-t)^{2d+2r-2}} 
\end{eqnarray*}
where $\beta_{i,j}=\beta_{i,j}({R/I_{G_{r,d}}})$.
Because ${\rm dim}(R/I_{G_{r,d}}) = d + 2r - 2$ by Theorem \ref{CM}, we must
divide out $(1-t)^d$ from the numerator to write the Hilbert series
in lowest terms.  Expanding the expression in the numerator yields:
\begin{align*}
\sum_{i}(-1)^i\left(\sum_j \beta_{i,j}t^{j}\right) &= 1 + \sum^{d-1}_{i=1}(-1)^{i}\cdot i\cdot\binom{d}{i+1}t^{i+1}+\sum^{d}_{i=1}(-1)^{i}\cdot d\cdot\binom{d-1}{i-1}t^{i+r-1} \\
&= 1 + \sum_{i=1}^{d}(-1)^{i+1}(i-1)\binom{d}{i}t^{i} - dt^{r}(1-t)^{d-1}
\end{align*}
where we applied an index shift in the first sum. 
If we now apply the identity ($i-1)\binom{d}{i}=(d-1)\binom{d-1}{i-1} - \binom{d-1}{i}$ we get
\[1 + (d-1)\sum_{i=1}^{d}\left((-1)^{i+1}\binom{d-1}{i-1}t^i\right)- \sum_{i=1}^{d}\left((-1)^{i+1}\binom{d-1}{i}t^i\right) - dt^{r}(1-t)^{d-1}\]
\begin{align*}
&= 1 + (d-1)(1-t)^{d-1}t + (1-t)^{d-1} - 1 - dt^{r}(1-t)^{d-1}\\
&= (1-t)^{d-1}[1 + (d-1)t - dt^r]\\
&= (1-t)^{d-1}[(1-t^r) + t(d-1)(1-t^{r-1})]\\
&= (1-t)^{d-1}[(1-t)(1+\cdots+t^{r-1}) + t(d-1)(1-t)(1+\cdots+t^{r-2})]\\
&= (1-t)^d[1 + dt + dt^2 + \cdots + dt^{r-1}].
\end{align*}
The conclusion follows.
\end{proof}

\begin{remark}\label{rem:Stanley-conj}
Stanley \cite[Conjecture 4]{St} conjectured that if
$S$ is a graded Cohen-Macaulay domain, then the $h$-vector of $S$ was
unimodal, i.e., there exists an index $j$ such that 
$h_i \leq h_{i+1}$ for all $0 \leq i < j$ and $h_i \geq h_{i+1}$ for
$i \geq j$.  Since toric ideals are always prime, Theorem \ref{CM} implies
that $K[E(G_{r,d})]/I_{G_{r,d}}$ is a graded Cohen-Macaulay domain,
and Theorem \ref{h-vector-theorem} shows that these domains satisfy
Stanley's conjecture.  Note that there is some ambiguity in the literature
regarding the precise statement and status of Stanley's conjecture; 
in particular, see \cite{M} and \cite{B}, but also refer to the appendix in the 
corrected version of \cite{B} on the arXiv ({\tt arXiv:1505.07377v3} [math.CO]). 
\end{remark}

\bibliographystyle{plain}

\end{document}